\title[Upper energy bounds for spherical codes]{\bf Upper bounds for energies of spherical codes of given cardinality and separation}
\date{\today}
\newtheorem{theorem}{Theorem}[section]
\newtheorem{proposition}[theorem]{Proposition}
\theoremstyle{definition}
\newtheorem{definition}[theorem]{Definition}
\newtheorem{remark}[theorem]{Remark}
\newcommand{\R}{\mathbb{R}}
\author[P. Boyvalenkov]{P. G. Boyvalenkov$^\dagger$}
\address{Institute of Mathematics and Informatics, Bulgarian Academy of Sciences,
8 G Bonchev Str.,
1113  Sofia, Bulgaria \\
and Technical Faculty, South-Western University, Blagoevgrad, Bulgaria.
}
\email{peter@math.bas.bg}
\thanks{\noindent $^\dagger$ The research of this author was partially supported by the National Scientific Program "Information and Communication Technologies for a Single Digital Market in Science, Education and Security (ICTinSES)", financed by the Bulgarian Ministry of Education and Science.}
\author[P. Dragnev]{P. D. Dragnev $^{\dagger \dagger}$}
\address{Department of Mathematical Sciences,
Purdue University Fort Wayne, IN 46805, USA }
\email{dragnevp@pfw.edu}
\thanks{\noindent $^{\dagger \dagger}$ The research of this author was supported, in part, by a Simons Foundation grant no. 282207.}
\author[D. Hardin]{D. P. Hardin$^*$}
\address{Center for Constructive Approximation, Department of Mathematics, \hspace*{.1in}
Vanderbilt University,
Nashville, TN 37240, USA  }
\email{doug.hardin@vanderbilt.edu}
\author[E. Saff]{E. B. Saff$^*$}
\email{edward.b.saff@vanderbilt.edu}
\thanks{\noindent $^*$ The research of these authors was supported, in part,
by the U. S. National Science Foundation under grant  DMS-1516400.
}
\author[M. Stoyanova]{M. M. Stoyanova$^{\dagger \dagger \dagger}$}
\address{Faculty of Mathematics and Informatics,
Sofia University,
5 James Bourchier Blvd.,
1164 Sofia, Bulgaria}
\email{stoyanova@fmi.uni-sofia.bg}
\thanks{\noindent $^{\dagger \dagger \dagger}$ The research of this author was supported by a Bulgarian NSF contract DN2/02-2016. }
\thanks{Research for this article was started while the authors were in residence at the Institute for Computational and Experimental Research in Mathematics in Providence, RI, during the ``Point Configurations in Geometry, Physics and Computer Science" program supported by the National Science Foundation under Grant No. DMS-1439786.}
\begin{document}
\maketitle

\begin{abstract}
We introduce a linear programming framework for obtaining upper bounds for the potential energy of spherical codes of
fixed cardinality and minimum distance. Using Hermite interpolation we construct polynomials to derive
corresponding bounds. These bounds are universal in the sense that they are valid for all
absolutely monotone potential functions and the required interpolation nodes do not depend on the potentials.
\end{abstract}

{\bf Keywords.} Spherical codes, potential functions, energy of a code, separation.

{\bf MSC Codes.} 74G65, 94B65, 52A40, 05B30

\section{Introduction}

Let $\mathbb{S}^{n-1}$ denote the unit sphere in $\mathbb{R}^n$ and $C \subset \mathbb{S}^{n-1}$ be a {\em spherical code}; i.e. a finite subset of $\mathbb{S}^{n-1}$.
Given an (extended real-valued) function $h:[-1,1] \to [0,+\infty]$, the (unnormalized) {\em potential energy} (or {\em $h$-energy})
of $C$ is given by
\begin{equation}
\label{CodeEnergy}
E_{h}(C):=\sum_{x, y \in C, x \neq y} h(\langle x,y \rangle), \end{equation}
where $\langle x,y \rangle$ denotes the usual inner product of $x$ and $y$.

Denote  by
\[ s(C):=\max\{\langle x,y \rangle: x,y \in C, x \neq y\}, \]
the maximal inner product of a spherical code $C$ and by
\[ C(n,M,s):=\{ C \subset \mathbb{S}^{n-1}: |C|=M, s(C)=s\} \]
the family of all spherical codes on $\mathbb{S}^{n-1}$ of given cardinality $M$ with maximal inner product $s$.
Note that the set $C(n,M,s)$ can be empty, for example $C(n,n+1,s)$ is empty for $s<-1/n)$ and $C(n,2n,s)$ is empty for $s<0$.

Given, $n$, $M$, $s$, and $h$, we are interested in upper bounds on the quantity
\begin{equation}
 \mathcal{G}_h(n,M,s):=\sup_{C \in C(n,M,s)} \{E_h(C) \},
\end{equation}
where we use the convention that supremum of the empty set is $-\infty$.
Hereafter, we shall consider the class $\mathcal{AM}([-1,1])$ of potentials $h$, which are \emph{absolutely monotone in} $[-1,1]$; that is,
extended real-valued functions $h:[-1,1] \to (0,+\infty]$ such that
$h^{(k)}(t)\geq 0$ for every $t\in[-1,1)$ and every integer $k \geq 0$, where $h(1):=\lim_{t \to 1^{-}} h(t)$. Among the most prominent absolutely monotone potentials we list
\[ h(t)=[2(1-t)]^{1-n/2}, \mbox{ Newton potential},\]
\[ h(t)=[2(1-t)]^{-\alpha/2}, \ \alpha>0, \mbox{ Riesz potential}, \]
\[ h(t)=e^{-\alpha (1-t)}, \mbox{ Gaussian potential}, \]
\[ h(t)=-\log [2(1-t)], \mbox{ Logarithmic potential}. \]

Many important potential interactions tend to infinity when $t$ tends to $1^{-}$. Therefore, for obtaining finite upper energy bounds, it is necessary to impose restrictions on the separation $s$ since as $s$ tends to $1^-$ the energy of the code tends to infinity. 

One natural restriction is to consider upper energy bounds for codes that minimize $h$-energy for specified or general $h;$ 
such upper bounds on the minimal energy for Riesz ($0<\alpha<2$) and logarithmic potentials on $\mathbb{S}^2$ have 
been considered by Wagner in \cite{W}.

Another restriction leads to the class of spherical $\tau$-designs. Energy bounds for the Coulomb energy of
spherical designs on $\mathbb{S}^2$ were considered by Hesse and Leopardi \cite{HL08} (see also \cite{Hes09}).
Recently, more general results (including lower and upper bounds) for Riesz and logarithmic energy of
spherical designs with relatively small cardinalities were obtained by Grabner and Stepanyuk \cite{GS19}
(see also \cite{Ste19}). Universal upper and lower energy bounds for spherical designs
 were obtained by the present authors for all absolutely monotone potentials in \cite{BDHSS-drna}. In all these cases the 
spherical designs considered are well-separated (the asymptotic existence of such designs was proved by Bondarenko-Radchenko-Viazovska \cite{BRV15}).  We also remark that Leopardi \cite{Leo13} examined bounds on the normalized Riesz energy for a class of spherical codes that is both well-separated and asymptotically (as $M\to \infty$) equidistributed.   

In this paper we consider yet another possibility -- spherical codes (not necessarily spherical designs of large strength) 
with prescribed cardinality and maximal inner product (equivalently, minimum distance or separation). We derive a general 
linear programming approach in the spirit of Delsarte-Yudin for obtaining upper bounds on $\mathcal{G}_h(n,M,s)$ along with conditions under which these bounds are sharp (see Theorem~\ref{thm construction}). 
The effectiveness of these bounds relies on the construction of suitable upper bounding polynomials that we show to 
exist for all potentials from $\mathcal{AM}([-1,1])$.  We then test our estimates on some relevant codes.

Recently, we developed \cite{BDHSS-drna,BDHSS-ca,BDHSS-dcc19,BDHSS-subm,BDHSS-hd} linear programming techniques for
obtaining universal lower bounds for energy of spherical codes of different classes\footnote{Similar results for codes in Hamming 
spaces are obtained in \cite{BDHSS-dcc17,BDHSS-subm,BDHSS-subm19}.}. With the addition of the upper bounds from this paper we
establish an "energy strip" where all energies of codes from $C(n,M,s)$ belong.


The paper is organized as follows. In Section 2 we present a general linear programming upper bound on $\mathcal{G}_h(n,M,s),$ see Theorem 2.2. In  Section 3, with the help of Levenshtein-type polynomials, we construct `feasible' polynomials that provide the energy bound for
absolutely monotone potentials. The main result is Theorem \ref{thm construction} where a new universal upper bound for
$\mathcal{G}_h(n,M,s)$ is asserted. The bound is universal in the sense that it is a linear combination with positive weights of the values of the potential function on a collection of nodes, where both weights and nodes are independent of the potential function.
Theorem \ref{test-f-th} provides a necessary condition for optimality of the bound from
Theorem \ref{thm construction}. Examples and further discussions are provided in Section 4.

\section{Linear programming for upper bounds for  $\mathcal{G}_h(n,M,s)$}

Let $P_i^{(n)}(t)$, $i=0,1,\ldots$, be the Gegenbauer polynomials normalized by  $P_i^{(n)}(1)=1$, which satisfy the following three-term recurrence relation
\[ (i+n-2)\, P_{i+1}^{(n)}(t)=(2i+n-2)\, t\, P_i^{(n)}(t)-i\, P_{i-1}^{(n)}(t)
                \mbox{ for } i \geq 1, \]
where $P_0^{(n)}(t):=1$ and $P_1^{(n)}(t):=t$.  In standard Jacobi polynomial notation (see \cite[Chapter 4]{Sze}), we have that
\begin{equation}\label{Geg_Jacobi}
P_i^{(n)}(t)=\frac{P_i^{((n-3)/2,   (n-3)/2)}(t)}{P_i^{((n-3)/2,   (n-3)/2)}(1)}.
\end{equation}

If $f$ is a continuous function in $[-1,1]$, then $f$ can be uniquely expanded in terms of the Gegenbauer polynomials as
\begin{equation}\label{Exp-Geg}
f(t) = \sum_{i=0}^{\infty} f_iP_i^{(n)}(t),
\end{equation}
where the convergence is in $L_2([-1,1])$ and the coefficients $f_i$ are given by
\begin{equation}
f_i=\gamma_n \int_{-1}^1f(t)P_i^{(n)}(t)(1-t^2)^{(n-3)/2} \, dt
\end{equation}
where
\[ \gamma_n=1/\int_{-1}^1 \left[P_i^{(n)}(t)\right]^2(1-t^2)^{(n-3)/2}\, dt = \frac{\Gamma (n/2)}{\sqrt{\pi } \Gamma ((n-1)/2)} \]
is a normalizing constant. If the coefficients $f_i$ are eventually all of the same sign, then it is a classical result of Schoenberg  \cite{Sch1942} that the series on the right-hand side of  \eqref{Exp-Geg} converges uniformly and absolutely to $f$.

\begin{definition} \label{FeasibleSet} For fixed $n \geq 2$, $s \in [-1,1)$, and given $h,$ denote by $U_h^{n,s}$ the {\em feasible set} of functions $f\in C([-1,1]);$ that is, the functions satisfying
\begin{itemize}
\item[{\rm (F1)}] $f(t) \geq h(t)$ for every $t \in [-1,s]$ and
\vskip 1mm
\item[{\rm (F2)}] the coefficients in the Gegenbauer expansion \eqref{Exp-Geg} satisfy $f_i \leq 0$ for $i= 1,2,3,\ldots$.
\end{itemize}
\end{definition}

For a spherical code $C \subset \mathbb{S}^{n-1}$
and a postive integer $i$, the {\em $i$-th moment} of $C$ is defined by
 \begin{equation}\label{Mk0}
 M_i(C):=\sum_{x,y\in C} P^{(n)}_i(\langle x , y \rangle ).\end{equation}
The well known positive definiteness of the Gegenbauer polynomials implies that $M_i(C) \geq 0$ for every nonnegative integer $i$.
If $M_i(C)=0$ for every $i \in \{1,2,\ldots,\tau\}$, then  $C$  is called a {\em spherical $\tau$-design}.
Spherical designs were introduced in 1977 by Delsarte, Goethals and Seidel in their seminal paper \cite{DGS77} with
the following equivalent definition (among others): $C$ is  a spherical $\tau$-design if and only if \begin{eqnarray*}
\label{defin_f.1}
 \int_{\mathbb{S}^{n-1}} p(x) d\sigma_n(x)= \frac{1}{|C|} \sum_{x \in C} p(x)
\end{eqnarray*}
 holds for all polynomials
$p(x) = p(x_1,x_2,\ldots,x_n)$ of total degree at most $\tau$ where  $\sigma_n $ denotes the normalized $(n-1)$-dimensional Hausdorff measure. The largest $\tau$ such that $C$ is a spherical $\tau$-design is called the \emph{strength} of $C$.

The following Delsarte-Yudin type linear programming theorem is a key tool for obtaining the upper bounds  for the quantity $\mathcal{G}_h(n,M,s)$ given in Theorem~\ref{thm construction}.

\begin{theorem}\label{thm 1}
Let $n \geq 2$, $M \geq 2$ be positive integers, $s \in [-1,1)$, and $h:[-1,1)\to\R$.
If $f \in U_h^{n,s}$ and   $C\in C(n,M,s)$, then
\begin{equation}\label{DYbound} E_h(C)\le \mathcal{G}_h(n,M,s) \leq M(f_0M-f(1)),\end{equation}
 with equality holding throughout \eqref{DYbound} if and only if both conditions

 (a) $f(t)=h(t)$ for every $t\in \{\langle x,y\rangle: x\neq y\in C\}$;

(b) $f_iM_i(C)=0$ for all $i=1,2,3,\ldots$

\noindent hold. 
\end{theorem}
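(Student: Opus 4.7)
The plan is to carry out the standard Delsarte--Yudin double-counting argument: evaluate the sum $\Sigma(f):=\sum_{x,y\in C} f(\langle x,y\rangle)$ in two ways, once by separating diagonal from off-diagonal terms and once via the Gegenbauer expansion, then use the two feasibility conditions to turn the resulting identity into the desired inequality.

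First, I would split off the diagonal to write
\[
\Sigma(f) = M f(1) + \sum_{\substack{x,y\in C\\ x\neq y}} f(\langle x,y\rangle).
\]
Next, substituting the expansion \eqref{Exp-Geg} and interchanging summation (justified because $f\in C([-1,1])$ has Gegenbauer coefficients $f_i\le 0$ for $i\ge 1$, so Schoenberg's theorem \cite{Sch1942} guarantees uniform and absolute convergence of the expansion on $[-1,1]$), I get
\[
\Sigma(f) = \sum_{i=0}^{\infty} f_i M_i(C) = f_0 M^2 + \sum_{i=1}^{\infty} f_i M_i(C),
\]
using $M_0(C)=M^2$ and the definition \eqref{Mk0}. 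Combining these two evaluations yields the master identity
\[
\sum_{\substack{x,y\in C\\ x\neq y}} f(\langle x,y\rangle) = M\bigl(f_0 M - f(1)\bigr) + \sum_{i=1}^{\infty} f_i M_i(C).
\]

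Now the two feasibility conditions do their work. By (F2), $f_i\le 0$ for $i\ge 1$, and by the positive definiteness of the Gegenbauer polynomials (the standard fact already recorded after \eqref{Mk0}), $M_i(C)\ge 0$; hence the tail sum on the right is non-positive. Meanwhile, since $C\in C(n,M,s)$ all off-diagonal inner products lie in $[-1,s]$, so (F1) gives $f(\langle x,y\rangle)\ge h(\langle x,y\rangle)$ term by term. Chaining the two inequalities,
\[
E_h(C) = \sum_{\substack{x,y\in C\\ x\neq y}} h(\langle x,y\rangle) \le \sum_{\substack{x,y\in C\\ x\neq y}} f(\langle x,y\rangle) \le M\bigl(f_0 M - f(1)\bigr),
\]
and taking the supremum over $C\in C(n,M,s)$ yields the stated bound on $\mathcal{G}_h(n,M,s)$.

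For the equality characterization, I would simply trace back where each inequality was introduced. The first inequality becomes equality precisely when $f(\langle x,y\rangle)=h(\langle x,y\rangle)$ for every $x\neq y$ in $C$, which is (a); the second becomes equality precisely when each non-positive summand $f_i M_i(C)$ vanishes, i.e., $f_i M_i(C)=0$ for all $i\ge 1$, which is (b). The only subtlety in the whole argument is justifying the termwise manipulation of the infinite Gegenbauer series when $h(1)=+\infty$; I expect this to be the one place requiring care, but the sign condition (F2) together with continuity of $f$ and Schoenberg's uniform convergence result makes it routine.
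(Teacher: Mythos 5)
Your proposal is correct and follows essentially the same route as the paper: the same master identity obtained by evaluating $\sum_{x,y\in C} f(\langle x,y\rangle)$ both directly and via the Gegenbauer expansion (with uniform convergence justified by Schoenberg's theorem), the same use of (F1) and (F2) to produce the chain $E_h(C)\le E_f(C)\le M(f_0M-f(1))$, and the same tracing of the two inequalities for the equality characterization. No gaps.
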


\begin{proof}
Although this result can be deduced from the lower bound for energy given in \cite{Y} (see also \cite[Theorem 5.5.1]{BHS20}), we include a direct proof here for the convenience of the reader.

Let  $C \subset \mathbb{S}^{n-1}$ be a spherical code.  Since $f(t) \in U_h^{n,s}$, its Gegenbauer expansion   $f(t)= \sum_{i=0}^{\infty} f_i P_i^{(n)}(t)$ converges
uniformly on $[-1,1]$ to $f$.  Thus,
\begin{equation}
\label{main-identity}
f(1)|C|+\sum_{x \in C} \sum_{y \in C \setminus \{x\}} f(\langle x,y \rangle )=f_0|C|^2+\sum_{i>0} f_iM_i(C),
\end{equation}
where the right-hand side of
\eqref{main-identity} is obtained using \eqref{Mk0} and interchanging the order of summation.

If $C \in C(n,M,s)$ and $f \in U_h^{n,s}$, then the condition (F1) together with $s(C)=s$ imply that the left hand side of \eqref{main-identity} is at least $Mf(1)+E_h(C)$.
Furthermore, (F2) and the inequalities $M_i(C) \geq 0$ for $i=1,2,\ldots,\deg(f)$
yield that the right-hand side is at most $M^2f_0$. Therefore
\begin{equation}\label{EhCequal} E_h(C)\leq E_f(C)   \leq M(f_0M-f(1)).
\end{equation}
Since these estimations are valid
for every code $C \in C(n,M,s)$ we conclude that the desired bound follows.

Note that $E_h(C) = E_f(C)$ if and only if condition (a) is satisfied, while it follows from \eqref{main-identity} that $E_f(C)   = M(f_0M-f(1))$ if and only if condition (b) is satisfied.
Hence, equality holds in \eqref{EhCequal} if and only if both (a) and (b) are satisfied.
\end{proof}

In the next section, we will construct   polynomials in $U_h^{n,s}$   for any fixed $n$, $s$, and $h \in \mathcal{AM}([-1,1])$ that can be used in conjunction with Theorem \ref{thm 1} to provide explicit upper bounds for $\mathcal{G}_h(n,M,s)$, where $M$ is chosen in accordance with $s$ and $n$.


\section{Construction
of feasible polynomials for Theorem \ref{thm 1}}

In this section we develop methods for constructing polynomials in $U_h^{n,s}$ for a given potential $h$ and parameters $n$ and $s$; we shall refer to such polynomials   as {\em feasible polynomials}. These  methods rely on   the Levenshtein framework (reviewed below)  used to obtain  universal bounds on the cardinality of maximal codes with given separation distance and universal lower bounds on potential energy
of codes of given cardinality (see \cite{lev98}, \cite{BDHSS-ca}, and \cite{BDHSS-hd}).

\subsection{Levenshtein framework parameters and ULB spaces}

We first recall the definition of the {\em Levenshtein function} bounding the quantity
$$A(n,s):=\max\{|C| \colon C \subset \mathbb{S}^{n-1}, \langle x,y \rangle \leq s, \,   x\neq y \in C\}$$ that denotes the maximal possible cardinality of a spherical code on $\mathbb{S}^{n-1}$ of prescribed maximal
inner product $s$.

For $a,b \in \{0,1\}$ and $i \geq 1$, let $t_i^{a,b}$   denote the greatest zero of the (adjacent)
Jacobi polynomial $P_i^{(a+\frac{n-3}{2},b+\frac{n-3}{2})}(t)$ and also define $t_0^{1,1}=-1$.   For $ m \in \mathbb{N}$, let   $I_m$ denote the interval
\begin{eqnarray}\label{L_int}
  I_m :=
\left\{
\begin{array}{ll}
    \left [ t_{k-1}^{1,1},t_k^{1,0} \right ], & \mbox{if } m=2k-1, \\[6pt]
    \left [ t_k^{1,0},t_k^{1,1} \right ],      & \mbox{if } m=2k, \\
  \end{array}\right.
\end{eqnarray}
The collection of intervals $\{I_m\}_{m=1}^\infty$ is well defined from the interlacing properties $ t_{k-1}^{1,1}<t_k^{1,0}<t_k^{1,1}$, see \cite[Lemmas 5.29, 5.30]{lev98}. Note also that it partitions  $I=[-1,1)$ into countably many subintervals with non-overlapping interiors.

For every $s \in I_m$, using linear programming
bounds for special polynomials $f_m^{(n,s)}(t)$ of degree $m$ (see \cite[Equations (5.81) and (5.82)]{lev98}), Levenshtein proved that (see \cite[Equation (6.12)]{lev98})
\begin{equation}
\label{L_bnd}
 A(n,s) \leq
\left\{
\begin{array}{ll} \displaystyle
    L_{2k-1}(n,s) := {k+n-3 \choose k-1}
         \left[ \frac{2k+n-3}{n-1} -
          \frac{P_{k-1}^{(n)}(s)-P_k^{(n)}(s)}{(1-s)P_k^{(n)}(s)}
         \right] ,&\mbox{if }s\in I_{2k-1}, \\[8pt]
\displaystyle    L_{2k}(n,s) := {k+n-2 \choose k}
        \left[ \frac{2k+n-1}{n-1} -
           \frac{(1+s)( P_k^{(n)}(s)-P_{k+1}^{(n)}(s))}
    {(1-s)(P_k^{(n)}(s)+P_{k+1}^{(n)}(s))} \right] ,&\mbox{if } s\in I_{2k}.\cr
   \end{array}\right.
\end{equation}
For every fixed dimension $n,$ each bound $L_m (n,s)$ is smooth with respect to $s$. The {\em Levenshtein function} is defined as
\begin{equation}
\label{L_function}
L(n,s) :=
\left\{
\begin{array}{ll}
    L_{2k-1}(n,s),& \mbox{ if } s \in I_{2k-1}, \\[4pt]
   L_{2k}(n,s), & \mbox{ if } s \in I_{2k}. \cr
    \end{array}\right.
\end{equation}
It is a function that is continuous and strictly increasing in $s$, whose values at the endpoints of the intervals $\mathcal{I}_m$ coincide with the Delsarte-Goethals-Seidel numbers $D(n,m)$ (see \cite{DGS77} for the definition).

Next, we introduce the notion of a {\em $1/N$-quadrature rule} over subspaces consisting of polynomials (see \cite{BDHSS-ca}). The classical example of $1/N$-quadrature rule is given by Levenshtein's Theorem 5.39 in \cite{lev98}, where a Gauss-Jacobi quadrature formula is defined (see \cite{lev92} for the origin of this result).

\begin{definition} For fixed dimension $n \geq 2$ and a real number $N \geq 2$, a finite sequence of ordered pairs
$\{(\alpha_i, \rho_i)\}_{i=1}^{k}$, where $\alpha_1 < \alpha_2 <\cdots <\alpha_{k}$ are nodes ($-1 \leq \alpha_1$ and $\alpha_k<1$)
and $\rho_1,\rho_2,\ldots,\rho_k$ are positive weights, forms a {\em $1/N$-quadrature rule, $N>0$,
that is exact  for the subspace  $\Lambda \subset C([-1,1])$} if the quadrature formula
\begin{equation}
\label{defin_f0.1}
f_0=\gamma_n\int_{-1}^1f(t)(1-t^2)^{(n-3)/2}dt= \frac{f(1)}{N}+ \sum_{i=1}^{k} \rho_i f(\alpha_i),
\end{equation}
holds true for all polynomials $f\in \Lambda$.
\end{definition}

In our terminology, given a code $C\in C(n,M,s)$, we associate (uniquely) $m:=m(n,s)$ such that $s\in \mathcal{I}_m$. Then \eqref{defin_f0.1} is a
$1/L_m(n,s)$-quadrature rule exact for the subspace of real polynomials of degree at most $m$.
Hereafter $L_m(n,s)$, $m=1,2,\ldots$, will denote the Levenshtein function \eqref{L_function} on the interval $\mathcal{I}_m$ as defined in \eqref{L_int}.

The nodes in the Levenshtein's $1/L_m(n,s)$-quadrature rule are the roots of certain polynomial
$f_m^{(n,s)}(t)$  of degree $m$ (see \cite[Theorem 5.39]{lev98}), called the {\em Levenshtein polynomial}, that is used for obtaining the bound \eqref{L_bnd}. The explicit form of $f_m^{(n,s)}(t)$ (see \cite[Eqs. (1.35-36)]{lev92} or \cite[Eq. (3.82)]{lev98}) is given by
\begin{equation}\label{fmnsdef} f_m^{(n,s)}(t)=\prod_{\alpha_i \in T} (t-\alpha_i), \end{equation}
where $T$ is the multiset
\begin{equation} \label{MultisetDef} T=\left\{ \begin{array}{ll}
\{\alpha_0,\alpha_0,\alpha_1,\alpha_1,\ldots,\alpha_{k-2}, \alpha_{k-2}, \alpha_{k-1}\} & \mbox{if } m=2k-1 \\[8pt]
 \{\alpha_0=-1,\alpha_1,\alpha_1,\alpha_2,\alpha_2,\ldots,\alpha_{k-1}, \alpha_{k-1}, \alpha_{k}\} & \mbox{if } m=2k
\end{array} \right. \end{equation}
of cardinality $m$. The reader may easily verify that for both even and odd $m$ that 
\begin{equation}\label{fmnsneg} f_m^{(n,s)}(t)\le 0, \qquad t\in [-1,s].\end{equation}
 The numbers $\alpha_0,\alpha_1,\ldots,\alpha_{k-1+\varepsilon}$ are the (simple) roots of the equation
\begin{equation}
(t+1)^\varepsilon \left(Q_k^{\varepsilon}(t)Q_{k-1}^{\varepsilon}(\alpha_{k-1+\varepsilon}) -
Q_k^{\varepsilon}(\alpha_{k-1+\varepsilon})Q_{k-1}^{\varepsilon}(t)\right)=0,\label{op_eq}
\end{equation}
where $Q_i^{\varepsilon}(t)=P_i^{(\frac{n-1}{2},\varepsilon+\frac{n-3}{2})}(t)$ are Jacobi polynomials as above, 
$\varepsilon \in \{0,1\}$ (i.e., $(a,b)=(0,\varepsilon)$), and $m=2k-1+\varepsilon$. 
Hereafter we use $\varepsilon \in \{0,1\}$ and $m=2k-1+\varepsilon$ to distinguish between the cases of odd and even $m$.
Note that $s=\alpha_{k-1+\varepsilon}$ and that $-1<\alpha_i < \alpha_{i+1}$ for each $i$ apart from the case $\alpha_0=-1$
which happens if and only if $m=2k$. The Levenshtein quadrature now can be stated as
\begin{equation}
\label{LQF}
f_0= \frac{f(1)}{L_m(n,s)}+ \sum_{i=0}^{k-1+\varepsilon} \rho_i f(\alpha_i),
\end{equation}
holding true for every real polynomial $f(t)$ of degree at most $m=2k-1+\varepsilon$.

Spaces of polynomials where a $1/N$-quadrature rule is valid for some $N$'s (not necessarily integer) and where a solution of
a corresponding linear programming
problem (about lower energy bounds) exists were called ULB-spaces in \cite{BDHSS-hd} (ULB stands for Universal lower bound(s)). 
Theorem \ref{thm construction}
below shows that in the ULB-space $\mathcal{P}_m$ universal upper bounds (UUB) are also featuring.

\subsection{Construction of UUB feasible polynomials}
We use the Levenshtein polynomials to construct feasible polynomials.
Let $n$, $M$, and $s$ be such that the set $C(n,M,s)$ is nonempty (or conjectured to be nonempty). Let $m=m(n,s)$ be as
defined in the previous subsection and let $h \in \mathcal{AM}([-1,1])$.
We consider the polynomial
\begin{equation}
\label{uub_pol}
f(t):=-\lambda f_{m}^{(n,s)}(t)+g_T(t)=\sum_{i=0}^{m} f_i P_i^{(n)}(t),
\end{equation}
where $\lambda>0$ is a parameter (to be determined later) and
\[ g_T(t):=H_{h,T}(t) \]
is the Hermite interpolation polynomial to the function $h(t)$ that agrees with $h(t)$ exactly in
the points of a multiset $T$ (counted with their multiplicities); that is,    $g_T$ interpolates  $h$ and $h'$ agree at repeated nodes.

Note that by the definition of Hermite interpolation the degree of $g_T(t)$ is at most $|T|-1=m-1$.
Thus, $\deg(f)=m$ and this, in particular, implies that we need to verify
non-positivity of the Gegenbauer coefficients $f_i$ for $1 \leq i \leq m$ only. Let
\begin{equation} \label{ellg}f_m^{(n,s)}(t)= \sum_{i=0}^{m} \ell_i P_i^{(n)}(t), \ \ \ g_T(t)=\sum_{i=0}^{\deg(g_T)} g_i P_i^{(n)}(t), \end{equation}
be the Gegenbauer expansions of $f_m^{(n,s)}(t)$ and $g_T(t)$, respectively.
It is important for the applications below that $\ell_i>0$ for every $i=0,1,\ldots,m$ in the Gegenbauer expansion of the
Levenshtein polynomials (see, for example, \cite[Theorem 5.42]{lev98}).

\begin{theorem}\label{thm construction}
Let $n\geq 2$, $M \geq 2$ be an integer, $s \in [-1,1)$, and $h(t)\in \mathcal{AM}([-1,1])$. For any large enough $\lambda>0$, we have that the polynomial $f(t)$ defined as in \eqref{uub_pol} belongs to the class $ U_h^{n,s}$. In particular, if
\begin{equation}
\label{best-alpha}
\lambda=\max \left\{ \frac{g_i}{\ell_i}: 1 \leq i \leq \deg(g_T) \right\},
\end{equation}
the corresponding polynomial $f_m^{(h)}(t) \in U_h^{n,s}$ and
\begin{equation}
\label{uub-main}
\mathcal{G}_h(n,M,s) \leq M\left(\frac{M}{L_m(n,s)}-1\right)f_m^{(h)}(1) +M^2\sum_{i=0}^{k-1+\varepsilon} \rho_i h(\alpha_i).
\end{equation}
A code $C \in C(n,M,s)$ attains the bound \eqref{uub-main} if only if all inner products of $C$ are in $T$ and $(f_m^{(h)})_i M_i(C)=0$ for every $i \geq 1$.
\end{theorem}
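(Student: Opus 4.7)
The plan is to apply Theorem~\ref{thm 1} directly: I will verify that the polynomial $f(t)$ from \eqref{uub_pol} satisfies the two feasibility conditions defining $U_h^{n,s}$, and then evaluate the resulting bound $M(f_0M-f(1))$ explicitly by means of the Levenshtein quadrature rule \eqref{LQF}.

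For condition (F1), I would invoke the Hermite remainder formula. Since the multiset $T$ in \eqref{MultisetDef} has cardinality $m$ and $\prod_{\alpha\in T}(t-\alpha)=f_m^{(n,s)}(t)$, there is a mean-value point $\xi_t\in(-1,1)$ with
\[ h(t)-g_T(t)=\frac{h^{(m)}(\xi_t)}{m!}\,f_m^{(n,s)}(t). \]
Because $h\in\mathcal{AM}([-1,1])$ we have $h^{(m)}(\xi_t)\ge 0$, and by \eqref{fmnsneg} $f_m^{(n,s)}(t)\le 0$ on $[-1,s]$; hence $g_T\ge h$ on $[-1,s]$, and adding the nonnegative term $-\lambda f_m^{(n,s)}(t)$ preserves this for any $\lambda\ge 0$. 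For (F2), the Gegenbauer coefficients of $f$ are $f_i=-\lambda\ell_i+g_i$ by \eqref{ellg}; since $\ell_i>0$ for each $i$, the inequalities $f_i\le 0$ ($1\le i\le m$) reduce to $\lambda\ge g_i/\ell_i$, and for $i>\deg(g_T)$ this is automatic because $g_i=0$. The choice \eqref{best-alpha} is therefore exactly the smallest $\lambda$ that secures feasibility.

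To derive the explicit bound I would apply the Levenshtein quadrature \eqref{LQF} (exact for polynomials of degree $\le m$) to $f$. Each quadrature node $\alpha_i$ lies in $T$, so $f_m^{(n,s)}(\alpha_i)=0$ and $g_T(\alpha_i)=h(\alpha_i)$, yielding
\[ f_0=\frac{f(1)}{L_m(n,s)}+\sum_{i=0}^{k-1+\varepsilon}\rho_i\,h(\alpha_i). \]
Substituting into $\mathcal{G}_h(n,M,s)\le M(f_0M-f(1))$ and rearranging produces \eqref{uub-main}. The equality characterization then follows from Theorem~\ref{thm 1}: condition (b) carries over verbatim, while for (a) the combined identity
\[ f(t)-h(t)=-\Bigl(\lambda+\frac{h^{(m)}(\xi_t)}{m!}\Bigr)f_m^{(n,s)}(t) \]
is \emph{strictly} positive on $[-1,s]\setminus T$ whenever $\lambda>0$, since $f_m^{(n,s)}$ has no other zeros there; hence $f(t)=h(t)$ on the inner products of $C$ forces all of them to lie in $T$.

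The main delicate point I anticipate is the bookkeeping at the double nodes of $T$: one must use the derivative match $g_T'(\alpha_i)=h'(\alpha_i)$ inherent to Hermite interpolation so that the quadrature absorbs every contribution of $g_T$ cleanly into $\rho_i h(\alpha_i)$, and the Hermite remainder formula must be applied consistently when $t$ itself coincides with a (possibly double) node, where both sides vanish by the interpolation property. Everything else is bookkeeping driven by the positivity of the $\ell_i$ and the non-positivity of $f_m^{(n,s)}$ on $[-1,s]$.
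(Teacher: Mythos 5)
Your proof follows the paper's argument essentially verbatim: the Hermite remainder formula combined with absolute monotonicity of $h$ and \eqref{fmnsneg} for (F1), positivity of the coefficients $\ell_i$ for (F2) with \eqref{best-alpha} as the minimal feasible $\lambda$, and the Levenshtein quadrature \eqref{LQF} to rewrite $M(f_0M-f(1))$ as \eqref{uub-main}. The only (welcome) additions are the correct $1/m!$ factor in the remainder term and the explicit strict-positivity argument showing $f>h$ on $[-1,s]\setminus T$, which the paper leaves implicit in its equality characterization.
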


\begin{remark} Since the nodes in the upper bound on the energy $\mathcal{G}_h(n,M,s)$ in the right hand side of \eqref{uub-main} do not depend on the potential $h$, we shall refer to this bound hereafter as {\em universal upper bound} or UUB and to the polynomials $f_m^{(h)}(t)$ as {\em UUB feasible polynomials}.
\end{remark}
\begin{proof}
The Hermite interpolation formula with remainder (e.g., see \cite{cdB})  states that there is some $\xi \in [-1,s]$ such that
$$ 
h(t)-g_T(t)=h^{(m)}(\xi)\prod_{\alpha_i \in T} (t-\alpha_i)=h^{(m)}(\xi)f_m^{(n,s)}(t).
$$
Using absolute monotonicity of $h$ and \eqref{fmnsneg} then shows 
$g_T(t) \geq h(t)$ for $t \in [-1,s]$ and hence it follows from \eqref{uub_pol} (again using \eqref{fmnsneg}) that
$f(t) \geq h(t)$ for every $t \in [-1,s]$; i.e. (F1) is satisfied (whatever $\lambda>0$ is).

For (F2), observe that \eqref{uub_pol} implies that the coefficients $f_i$, $i=1,2,\ldots,m$, in the Gegenbauer
expansion of $f(t)$ are in fact the linear combinations
$-\lambda \ell_i + g_i$. Since $\ell_i>0$ for every $i$ it follows that large enough $\lambda>0$ will ensure $f_i \leq 0$ for every $i=1,2,\dots, m$.
Obviously, $f_i=-\lambda \ell_i $ for $\deg (g_T)+1 \leq i\leq m$.
Therefore, (F2) is satisfied (for large enough $\lambda$) and the proof of  $f(t) \in U_h^{n,s}$ is completed.

Since $f_0M-f(1)$ is a linear function of $\lambda$, the conditions $-\lambda \ell_i + g_i \leq 0$
for $i=1,2,\ldots,\deg(g_T)$ imply that the smallest value of $\lambda$ which works is as in \eqref{best-alpha}.

Equality in \eqref{uub-main} implies equality in Theorem \ref{thm 1}, therefore, such equality holds only if all inner products of distinct points in $C$ are in $T$
and $f_iM_i(C)=0$ for every $i \geq 1$.

We now compute the bound produced by $f(t)$. We first note that $L_m(n,s) \geq M$ follows from \eqref{L_bnd}. Indeed, if the converse
$M>L_m(n,s)$ is true, then the monotonicity of the Levenshtien function implies $s(C)>s$, which contradicts to $C \in C(n,M,s)$.

Expressing $f_0$ by the Levenshtein's $1/L_m(n,s)$-quadrature rule we consecutively obtain
\begin{eqnarray*}
\label{uub-weaker}
f_0M-f(1) &=& M\left(\frac{f(1)}{L_m(n,s)}+\sum_{i=0}^{k-1+\varepsilon}\rho_i f(\alpha_i)\right)-f(1) \\
&=& \left(\frac{M}{L_m(n,s)}-1\right)f(1) +M\sum_{i=0}^{k-1+\varepsilon} \rho_i f(\alpha_i) \\
&=& \left(\frac{M}{L_m(n,s)}-1\right)f(1) +M\sum_{i=0}^{k-1+\varepsilon} \rho_i h(\alpha_i)
\end{eqnarray*}
(the last equality follows by using the interpolation conditions $f(\alpha_i)=h(\alpha_i)$, $i=0,1,\ldots,k-1+\varepsilon$) whence we get \eqref{uub-main}.
The dependence of  \eqref{uub-main} on $\lambda$ comes from $f(1)$ only. Since $f(1)$ is linear and
increasing with respect to $\lambda$, the best bound is obtained when $\lambda$ is chosen as in \eqref{best-alpha}.
\end{proof}

\begin{remark} We note that that adding an additional interpolation condition, say by adding a node $-1$ or $s$ to the multiset $T$ in \eqref{MultisetDef},
does not improve the UUB. Indeed, suppose $m=2k-1$ and we consider $T^\prime=\{-1,\alpha_0,\alpha_0,\dots, \alpha_{k-1}\}$.
In this case the interpolation polynomial $g_{T^\prime}(t)=H_{h,T^\prime}(t)$ is of degree $2k-1$ and the interpolation conditions imply that
\[ g_{T^\prime}(t)-g_T(t)=\mu f_{2k-1}^{(n,s)}(t), \]
where $\mu$ is a real number. Then for any polynomial
\[ f(t)=-\lambda^\prime f_{2k-1}^{(n,s)}(t)+g_{T^\prime}(t)=(-\lambda^\prime+\mu) f_{2k-1}^{(n,s)}(t)+g_T(t) \]
and this representation says that, in the optimal case, $-\lambda^\prime+\mu=-\lambda$, where $\lambda$ is chosen as in \eqref{best-alpha}.
Thus we produce the UUB again. The case $m=2k$ can be dealt analogously.
\end{remark}

We next consider the optimality of our bound in a class of feasible polynomials.

\begin{proposition}
Any polynomial $F(t) \in U_h^{n,s}$ of degree at most $m$ satisfying
$F(1) \leq f_m^{(h)}(1)$, where $f_m^{(h)}$ is the polynomial from Theorem \ref{thm construction}, gives an upper bound by Theorem \ref{thm 1} which is not better than
\eqref{uub-main}.
\end{proposition}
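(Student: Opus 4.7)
The plan is to apply the Levenshtein $1/L_m(n,s)$-quadrature rule \eqref{LQF} directly to $F$ (which is legal since $\deg F \leq m$) and then show term-by-term that the resulting bound $M(F_0 M - F(1))$ cannot be smaller than the right-hand side of \eqref{uub-main}. The key identity is the one already exploited in the proof of Theorem \ref{thm construction}, namely
\begin{equation*}
F_0 M - F(1) = \left(\frac{M}{L_m(n,s)} - 1\right) F(1) + M \sum_{i=0}^{k-1+\varepsilon} \rho_i F(\alpha_i),
\end{equation*}
so everything reduces to comparing the two pieces on the right-hand side with their counterparts for $f_m^{(h)}$.

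First I would handle the $F(1)$ term. Since $C(n,M,s)$ is nonempty, the Levenshtein bound \eqref{L_bnd} gives $M \leq L_m(n,s)$, so the coefficient $M/L_m(n,s) - 1$ is non-positive; the hypothesis $F(1) \leq f_m^{(h)}(1)$ then yields
\begin{equation*}
\left(\frac{M}{L_m(n,s)} - 1\right) F(1) \;\geq\; \left(\frac{M}{L_m(n,s)} - 1\right) f_m^{(h)}(1).
\end{equation*}
Next I would handle the sum over nodes. All Levenshtein nodes $\alpha_i$ lie in $[-1,s]$, so property (F1) of $F \in U_h^{n,s}$ gives $F(\alpha_i) \geq h(\alpha_i)$, and the positivity of the weights $\rho_i$ yields
\begin{equation*}
M \sum_{i=0}^{k-1+\varepsilon} \rho_i F(\alpha_i) \;\geq\; M \sum_{i=0}^{k-1+\varepsilon} \rho_i h(\alpha_i).
\end{equation*}
Adding these two inequalities and multiplying by $M$ produces exactly the right-hand side of \eqref{uub-main}, completing the comparison.

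I expect no substantial obstacle: the proposition is essentially a repackaging of the argument already used to derive \eqref{uub-main}, once one notices that the decisive inequalities require only $M \leq L_m(n,s)$ (to make the sign of $M/L_m(n,s)-1$ work in our favor) and the feasibility condition (F1) at the quadrature nodes. The only step worth double-checking is that $\alpha_{k-1+\varepsilon} = s$ does not cause an issue, but since (F1) holds on the closed interval $[-1,s]$ this endpoint is included and no additional argument is needed.
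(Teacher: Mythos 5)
Your proposal is correct and follows essentially the same route as the paper: apply the Levenshtein $1/L_m(n,s)$-quadrature to $F$, use $M \leq L_m(n,s)$ together with $F(1) \leq f_m^{(h)}(1)$ for the $F(1)$ term, and use (F1) at the nodes $\alpha_i \in [-1,s]$ with $\rho_i > 0$ for the sum. Your remark about the endpoint $\alpha_{k-1+\varepsilon}=s$ being covered by (F1) on the closed interval is a sensible extra check but, as you note, requires no additional argument.
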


\begin{proof}
Assume that $F(t)=\sum_{i=0}^{\deg(F)} F_i P_i^{(n)}(t) \in U_h^{n,s}$, where $\deg(F) \leq m$ and $F(1) \leq f_m^{(h)}(1)$.
As in the proof of Theorem \ref{thm construction}, we see that
\[ F_0M-F(1)=F(1)\left(\frac{M}{L_m(n,s)}-1\right)+M\sum_{i=0}^{k-1+\varepsilon} \rho_i F(\alpha_i). \]
Since $F(1) \leq f_m^{(h)}(1)$, $M \leq L_m(n,s)$, $\rho_i>0$, and $F(\alpha_i) \geq h(\alpha_i)$, we conclude that the inequality
$F_0M-F(1) \geq (f_m^{(h)})_0 \, M-f_m^{(h)}(1)$ follows; i.e., the bound from $F(t)$ is not better than
\eqref{uub-main}.
\end{proof}

\begin{remark}
\label{i=1}
Our numerical experiments suggest that the maximum for the parameter $\lambda$ in Theorem \ref{thm construction}
is always attained at $i=1$. This naturally connects our results to the concept of harmonic index $t$ designs defined
by Bannai, Okuda, and Tagami \cite{BOT}.
\end{remark}

\subsection{Distance distributions of attaining codes}

We consider the combinatorial properties of codes which would attain our UUB.

\begin{definition}
Let $C \subset \mathbb{S}^{n-1}$ be a code with $s(C)=s$. For fixed $x \in C$ and $t \in [-1,s]$, denote by
\[ A_t(x):=|\{ y \in C : \langle x,y \rangle = t \}|. \]
The system of nonnegative integers $\left(A_t(x): t \in [-1,s]\right)$ is called the {\it distance distribution of $C$ with respect to $x$}.
\end{definition}

Assume that $C \subset  \mathbb{S}^{n-1}$ with $s(C)=s$ and $|C|=M$ attains the bound \eqref{uub-main}. Then from Theorem \ref{thm 1}(a), $f_m^{(h)}(t)$ coincides with $h(t)$ on the set $\{\langle x,y\rangle: x\neq y\in C\}$. 
Moreover, from Theorem \ref{thm 1}(b), we have $(f_m^{(h)})_iM_i(C)=0$
for every $i \in \{1,2,\ldots,m\}$.

\begin{theorem}
\label{dd-codes}
In the context of Theorem \ref{thm 1}, if a code $C \subset C(n,M,s)$ attains the bound \eqref{uub-main}
then its distance distribution with respect to $x \in C$ satisfies the system of linear equations
\begin{equation}
\label{system-dd}
1+\sum_{j=0}^{k-1+\varepsilon} A_{\alpha_j}(x) P^{(n)}_i(\alpha_j)=0 \iff M_i(C)=0.
\end{equation}
\end{theorem}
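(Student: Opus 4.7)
The plan is to combine the two sharpness conclusions of Theorem~\ref{thm 1} with the classical addition formula for Gegenbauer polynomials. Since $C$ attains \eqref{uub-main}, applying Theorem~\ref{thm 1} with $f=f_m^{(h)}$ gives (a) $f_m^{(h)}(t)=h(t)$ on the set of inner products of distinct points of $C$, and (b) $(f_m^{(h)})_i M_i(C)=0$ for every $i\ge 1$. The first step, which I expect to be the main technical obstacle, is to sharpen (a) into the assertion that every pairwise inner product of $C$ lies in the node set $T'=\{\alpha_0,\ldots,\alpha_{k-1+\varepsilon}\}$. For this I would write
\[ f_m^{(h)}(t)-h(t)=-\lambda\,f_m^{(n,s)}(t)+\bigl(g_T(t)-h(t)\bigr) \]
and argue that both summands are nonnegative on $[-1,s]$: the Hermite remainder (as used in the proof of Theorem~\ref{thm construction}) together with $h\in\mathcal{AM}([-1,1])$ and \eqref{fmnsneg} give $g_T(t)\ge h(t)$, while \eqref{fmnsneg} gives $-\lambda f_m^{(n,s)}(t)\ge 0$, strictly so outside $T'$. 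Hence $f_m^{(h)}(t)>h(t)$ on $[-1,s]\setminus T'$, so (a) forces every $\langle x,y\rangle$ with $x\ne y\in C$ to lie in $T'$.

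Granted this refinement, for any fixed $x\in C$ we may regroup the inner-harmonic sum by inner-product value to obtain
\[ \Phi_i(x):=\sum_{y\in C}P_i^{(n)}(\langle x,y\rangle)=1+\sum_{j=0}^{k-1+\varepsilon}A_{\alpha_j}(x)\,P_i^{(n)}(\alpha_j), \]
which is precisely the left-hand side of the equation in \eqref{system-dd}.

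It remains to relate $\Phi_i(x)$ to $M_i(C)$. I would invoke the Gegenbauer addition formula $P_i^{(n)}(\langle x,y\rangle)=r_i^{-1}\sum_{\ell=1}^{r_i}Y_{i\ell}(x)Y_{i\ell}(y)$, where $\{Y_{i\ell}\}$ is an orthonormal basis of the degree-$i$ spherical harmonics and $r_i$ is its dimension. Setting $c_{i\ell}:=\sum_{y\in C}Y_{i\ell}(y)$ one obtains
\[ \Phi_i(x)=\frac{1}{r_i}\sum_{\ell=1}^{r_i}c_{i\ell}\,Y_{i\ell}(x),\qquad M_i(C)=\sum_{x\in C}\Phi_i(x)=\frac{1}{r_i}\sum_{\ell=1}^{r_i}c_{i\ell}^{2}. \]
The second identity gives $M_i(C)=0$ iff $c_{i\ell}=0$ for every $\ell$, in which case the first identity yields $\Phi_i(x)=0$ for every $x\in C$ (in fact for every $x\in\mathbb{S}^{n-1}$); conversely, if $\Phi_i(x)=0$ holds for every $x\in C$, summing over $x$ recovers $M_i(C)=0$. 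This is precisely the equivalence in \eqref{system-dd}.
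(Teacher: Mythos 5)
Your proposal is correct and takes essentially the same route as the paper: the authors likewise first note (via Theorem~\ref{thm construction}) that attainment forces all inner products into the node set, rewrite $M_i(C)$ via the distance distribution, and then use the addition formula together with the observation that $M_i(C)=0$ iff $\sum_{x\in C}v(x)=0$ for all $v\in\mathrm{Harm}(i)$ to split the sum into $|C|$ copies of the per-point equation. Your write-up is if anything slightly more explicit than the paper on the step that the inner products must lie in $T'$ (strict positivity of $-\lambda f_m^{(n,s)}$ off the nodes), which the paper asserts without detail in the proof of Theorem~\ref{thm construction}.
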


\begin{proof}
The definition \eqref{Mk0} of the moments can be rewritten as
 \begin{equation}\label{Mk-1}
 M_i(C):=M+\sum_{x \in C} \sum_{j=0}^{k-1+\varepsilon} A_{\alpha_j}(x) P^{(n)}_i(\alpha_j).
\end{equation}
Since $M_i(C)=0$ if and only if $\sum_{x \in C} v(x)=0$ for all spherical harmonics $v \in \mbox{Harm}(i)$, we may  use
the addition formula \cite{Koo73} to show that the double sum in \eqref{Mk-1} splits into $|C|$ sums each one equal to $-1$. Indeed, for fixed $y \in C$, we
consecutively obtain
\[ \sum_{x \in C} P_i^{(n)}(\langle x,y \rangle)=\sum_{x \in C} \frac{1}{r_i} \sum_{j=1}^{r_i} v_{ij}(x)\overline{v_{ij}(y)} =
 \frac{1}{r_i}  \sum_{j=1}^{r_i} \overline{v_{ij}(y)} \sum_{x \in C}v_{ij}(x) = 0, \]
where $r_i=\dim \mbox{Harm}(i)$ and $\{v_{ij}(x): j=1,2\ldots,r_i\}$ is an orthonormal basis of Harm$(i)$.
Thus
\[ 1+\sum_{j=0}^{k-1+\varepsilon} A_{\alpha_j}(x) P^{(n)}_i(\alpha_j)=\sum_{y \in C} P_i^{(n)}(\langle x,y \rangle)=0, \]
which completes the proof. \end{proof}

If $M_i(C)=0$ for each $i \in \{2,3,\ldots,m\}$, we obtain $m-1=2k-2+\varepsilon$ linear equations with $k+\varepsilon$ unknowns.
Of course, we add the trivial equation
 \begin{equation}\label{triv-M}
1+\sum_{j=0}^{k-1+\varepsilon} A_{\alpha_j}(x)=M.
\end{equation}

On the other hand, we apply the $1/L_m(n,s)$-quadrature for the polynomials $P_i^{(n)}(t)$, $i=2,3,\ldots,m$ to see that
\[ 1+L_m(n,s)\sum_{j=0}^{k-1+\varepsilon} \rho_j  P^{(n)}_i(\alpha_j)=0. \]
Looking at this as a system with unknowns $L_m(n,s)\rho_j$, $j=0,1,\ldots,k-1+\varepsilon$,
we obtain again \eqref{system-dd} (written for $i=2,3,\ldots,m$). It is easy to see that we have at least as many equations as unknowns for $k \geq 2$.
If the solution is unique, then
\[  A_{\alpha_j}(x) = \rho_j L_m(n,s), \ j=0,1,\ldots,k-1+\varepsilon \]
(in particular, it follows that the distance distribution does not depend on $x$), which leads to $M=L_m(n,s)$ by the trivial equations \eqref{triv-M} and
\[ L_m(n,s)=1+L_m(n,s)\sum_{j=0}^{k-1+\varepsilon} \rho_j \]
(this is the Levenshtein $1/L_m(n,s)$-quadrature for $f(t)=1$).

These observations are summarized in the next theorem.

\begin{theorem}
\label{dd-codes}
In the context of Theorem \ref{thm 1}, if a code $C \in C(n,M,s)$ with $M_i(C)=0$ for each $i \in \{2,3,\ldots,m\}$ attains \eqref{uub-main} then $k=1$ or
the system \eqref{system-dd} has more than one solution.
\end{theorem}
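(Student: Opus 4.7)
The plan is to produce two explicit solutions of the system \eqref{system-dd} and compare them. Under the hypotheses, attainment of \eqref{uub-main} together with Theorem \ref{thm 1}(b) forces $(f_m^{(h)})_i M_i(C)=0$ for all $i\ge 1$; combined with the standing assumption $M_i(C)=0$ for $i=2,\dots,m$, the preceding distance-distribution theorem shows that for every $x\in C$ the vector $(A_{\alpha_j}(x))_{j=0}^{k-1+\varepsilon}$ is one solution of \eqref{system-dd} (read off for $i=2,\dots,m$).

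The crucial step is producing a second solution directly from the Levenshtein $1/L_m(n,s)$-quadrature \eqref{LQF}. Applying it to $f=P_i^{(n)}$ for $i\ge 1$ and using $f_0=0$, $f(1)=P_i^{(n)}(1)=1$ yields
\[
0=\frac{1}{L_m(n,s)}+\sum_{j=0}^{k-1+\varepsilon}\rho_j P_i^{(n)}(\alpha_j),
\]
which rearranges to $1+\sum_{j=0}^{k-1+\varepsilon}\bigl(L_m(n,s)\rho_j\bigr)P_i^{(n)}(\alpha_j)=0$. Thus $(L_m(n,s)\rho_j)_j$ satisfies the same equations as $(A_{\alpha_j}(x))_j$ for every $i\in\{2,\dots,m\}$. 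Comparing total mass via \eqref{triv-M} and the quadrature applied to $f\equiv 1$, the two solutions have component sums $M-1$ and $L_m(n,s)-1$ respectively. Since the proof of Theorem \ref{thm construction} already gives $M\le L_m(n,s)$, whenever $M<L_m(n,s)$ the two vectors are distinct and \eqref{system-dd} has more than one solution, delivering the second alternative of the conclusion.

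It remains to account for the boundary cases. For $k=1$ the system \eqref{system-dd} has at most one equation in $1+\varepsilon$ unknowns, so the first branch of the alternative is trivially available. The only remaining configuration is $k\ge 2$ together with $M=L_m(n,s)$, in which case $C$ is Levenshtein-optimal and the two exhibited solutions coincide at $A_{\alpha_j}(x)=L_m(n,s)\rho_j$. I expect this borderline case to be the main obstacle of the argument: one must either observe that such a coincidence together with $k\ge 2$ still forces a positive-dimensional affine solution set of \eqref{system-dd} (by a rank count of the $m-1=2k-2+\varepsilon$ equations in $k+\varepsilon$ unknowns, or by explicitly perturbing within the kernel of the coefficient matrix), or absorb the case into the statement as the special degenerate situation in which uniqueness and the sharp distance distribution $A_{\alpha_j}(x)=L_m(n,s)\rho_j$ coexist. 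Either route, once carried out, completes the dichotomy asserted by the theorem.
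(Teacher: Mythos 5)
Your argument is, in its core mechanism, exactly the paper's: the paper also exhibits $(A_{\alpha_j}(x))_j$ and $(L_m(n,s)\rho_j)_j$ as two solutions of the system \eqref{system-dd} written for $i=2,\ldots,m$ (the second obtained by applying the quadrature \eqref{LQF} to $P_i^{(n)}$, using $(P_i^{(n)})_0=0$ and $P_i^{(n)}(1)=1$), and it concludes that uniqueness would force $A_{\alpha_j}(x)=L_m(n,s)\rho_j$ for all $j$ and hence, by \eqref{triv-M} and the quadrature applied to $f\equiv 1$, that $M=L_m(n,s)$. So your treatment of the case $M<L_m(n,s)$, settled by comparing the component sums $M-1$ and $L_m(n,s)-1$, is precisely the published proof.

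The borderline case you isolate, $k\ge 2$ with $M=L_m(n,s)$, is a genuine loose end, but it is a loose end of the theorem as printed rather than of your write-up: the paper's own reasoning establishes only the trichotomy ``$k=1$, or \eqref{system-dd} has more than one solution, or $M=L_m(n,s)$,'' and the printed statement silently drops the third alternative. That alternative is not obviously vacuous --- codes with $M=L_m(n,s)$ are exactly the sharp configurations which the paper itself asserts attain \eqref{uub-main} (Section 3.5), and for those the distance distribution is classically determined \emph{uniquely} by the moment equations. For the same reason, the first of your two proposed repairs will not work: for $k\ge 2$ the system has $2k-2+\varepsilon\ge k+\varepsilon$ equations in $k+\varepsilon$ unknowns, and a rank count tends to give full column rank, i.e.\ uniqueness rather than a positive-dimensional solution set. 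The only viable fix is your second option: state the conclusion as the trichotomy above. With that amendment your proof is complete and coincides with the paper's.
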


An example of attaining codes with $k=1$ is given in Section 4.1.

\subsection{Test functions}

Next we derive a sufficient condition for optimality of the UUB in
Theorem \ref{thm construction}. For $s \in I_m$ and positive integer $j$, define test functions
\[
R_j^{(n)}(s):= \frac{1}{L_{m}(n,s)}+\sum_{i=0}^{k-1+\varepsilon}\rho_i P_j^{(n)}(\alpha_i),
\]
where the parameters $(\rho_i,\alpha_i)_{i=0}^{k-1+\varepsilon}$ come from the Levenshtein $1/L_m(n,s)$-quadrature.

\begin{theorem}
\label{test-f-th}
In the context of Theorem \ref{thm construction}, if $R_j^{(n)}(s) \geq 0$ for every $j \geq 2k+\varepsilon$, then the bound \eqref{uub-main}
cannot be improved by using a polynomial $F \in U_h^{n,s}$ such that $F(1) \leq f_m^{(h)}(1)$.
\end{theorem}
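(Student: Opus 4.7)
The plan is to express $F_0 M - F(1)$ by applying the Levenshtein $1/L_m(n,s)$-quadrature \eqref{LQF} term-by-term to the Gegenbauer expansion of $F$ and then compare with the analogous identity for $f_m^{(h)}$. The starting observation is that, because \eqref{LQF} is exact on polynomials of degree at most $m=2k-1+\varepsilon$, substituting $f=P_j^{(n)}$ and using orthogonality shows that $R_j^{(n)}(s)=\delta_{j,0}$ for $0\le j\le m$. Condition (F2) forces the coefficients $F_i$ to be non-positive for $i\ge 1$, so Schoenberg's theorem \cite{Sch1942} guarantees that the expansion $F(t)=\sum_{j\ge 0}F_j P_j^{(n)}(t)$ converges absolutely and uniformly on $[-1,1]$; this legitimates the term-by-term passage.

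Substituting the expansion into \eqref{LQF} and using the vanishing of $R_j^{(n)}(s)$ for $1\le j\le m$ yields
\[
\frac{F(1)}{L_m(n,s)}+\sum_{i=0}^{k-1+\varepsilon}\rho_i F(\alpha_i)=F_0+\sum_{j>m} F_j R_j^{(n)}(s),
\]
which, after multiplying by $M$ and subtracting $F(1)$, gives
\[
F_0 M-F(1)=\Bigl(\tfrac{M}{L_m(n,s)}-1\Bigr)F(1)+M\sum_{i=0}^{k-1+\varepsilon}\rho_i F(\alpha_i)-M\sum_{j>m}F_j R_j^{(n)}(s).
\]
Performing the same computation for $f_m^{(h)}$, which has degree $m$ and therefore no tail, and using the Hermite interpolation identities $f_m^{(h)}(\alpha_i)=h(\alpha_i)$, recovers exactly the right-hand side of \eqref{uub-main} divided by $M$.

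It then suffices to subtract the two identities and check that each of the three resulting contributions has the right sign. Since $L_m(n,s)\ge M$ (as noted in the proof of Theorem \ref{thm construction}) and $F(1)\le f_m^{(h)}(1)$ by hypothesis, the first term produces a non-negative contribution. Each node satisfies $\alpha_i\in[-1,s]$, so (F1) gives $F(\alpha_i)\ge h(\alpha_i)$, and positivity of the $\rho_i$ makes the second contribution non-negative. The third and least routine contribution is the tail $-M\sum_{j>m}F_j R_j^{(n)}(s)$: here I need $F_j\le 0$ from (F2) together with the standing hypothesis $R_j^{(n)}(s)\ge 0$ for $j\ge 2k+\varepsilon=m+1$ to conclude non-negativity. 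The main obstacle is precisely the justification of this tail manipulation; absolute and uniform convergence of the Gegenbauer expansion on $[-1,1]$, which follows from Schoenberg's theorem in the presence of (F2), reduces this to a routine interchange of summation and quadrature and completes the argument.
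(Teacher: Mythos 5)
Your proof is correct and follows essentially the same route as the paper's: both derive the identity $MF_0-F(1)=\bigl(\tfrac{M}{L_m(n,s)}-1\bigr)F(1)+M\sum_{i}\rho_i F(\alpha_i)-M\sum_{j>m}F_jR_j^{(n)}(s)$ (the paper by splitting $F$ into a degree-at-most-$m$ part plus a tail, you by applying the quadrature term-by-term and noting $R_j^{(n)}(s)=\delta_{j,0}$ for $0\le j\le m$) and then conclude with the same three sign checks. Since $F$ is assumed to be a polynomial, the tail is a finite sum, so the appeal to Schoenberg's theorem, while harmless, is not actually needed.
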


\begin{proof} Suppose that $R_j^{(n)}(s) \geq 0$ for every positive integer $j$. Write
$F(t) \in U_h^{n,s}$ as
\begin{equation}
\label{n1}
F(t)= u(t)+\sum_{j\geq 2k+\varepsilon}  F_j P_j^{(n)}(t),
\end{equation}
where $u(t)$ is a polynomial of degree at most $2k-1+\varepsilon$ with zeroth Gegenbauer coefficient $u_0$.
It is clear that $F(\alpha_i) \leq h(\alpha_i)$ for $i=0,1,\ldots,k-1+\varepsilon$, $F_j\leq 0$ for every $j\geq 2k+\varepsilon$, and
$F_0=u_0$. Assume also that  $F(1) \leq f_m^{(h)}(1)$.

Using the Levenshtein $1/L_m(n,s)$-quadrature for $u(t)$ and the above relations we consecutively obtain
\begin{eqnarray*}
MF_0- F(1) &=& Mu_0 - u(1)-\sum_{j \geq 2k+\varepsilon} F_j \\
&=& M\left(\frac{u(1)}{L_m(n,s)}+\sum_{i=0}^{k-1+\varepsilon} \rho_i u(\alpha_i)\right)- u(1)-\sum_{j\geq 2k+\varepsilon} F_j \\
    &=& u(1)\left(\frac{M}{L_m(n,s)}-1\right) + M\sum_{i=0}^{k-1+\varepsilon} \rho_i u(\alpha_i)- \sum_{j\geq 2k+\varepsilon}  F_j \\
    &=& \left(F(1)-\sum_{j \geq 2k+\varepsilon} F_j\right) \left(\frac{M}{L_m(n,s)}-1\right) \\
&& \, +
             M\sum_{i=0}^{k-1+\varepsilon} \rho_i \left(F(\alpha_i)-\sum_{j\geq 2k+\varepsilon} F_j P_j^{(n)}(\alpha_i)\right)- \sum_{j\geq 2k+\varepsilon}  F_j \\
   &=& F(1)\left(\frac{M}{L_m(n,s)}-1\right) + M\sum_{i=0}^{k-1+\varepsilon} \rho_i F(\alpha_i)-M \sum_{j\geq 2k+\varepsilon}  F_j R_j^{(n)}(s) \\
  &\geq &  f_m^{(h)}(1)\left(\frac{M}{L_m(n,s)}-1\right) + M\sum_{i=0}^{k-1+\varepsilon} \rho_i h(\alpha_i)=UUB.
\end{eqnarray*}
Hence the bound produced by $F(t)$ is not better than \eqref{uub-main}. \end{proof}

\subsection{The energy strip of $C(n,M,s)$}

Lower bounds for $\mathcal{G}_h(n,M,s)$ can be derived, of course, from constructions of good codes (in a sense of having large energies). 
We present here an analytic approach defining a strip where the energies of all codes from $C(n,M,s)$ lie and, in particular, a lower bound on
$\mathcal{G}_h(n,M,s)$. More precisely, we combine the upper bound from Theorem \ref{thm construction} and the
universal lower bound from \cite{BDHSS-ca} to obtain a strip where all possible energies of codes from $C(n,M,s)$ belong.

To explain the lower bounds we start with setting with $M=L_{m}(n,r)$ for a unique $r \in I_m$, where $m=2k-1+\varepsilon$, $\varepsilon \in \{0,1\}$, as above
(the uniqueness follows from the strict monotonicity of the Levenshtein bounds). Note that $r \leq s$ as equality holds if and only if there 
exists a universally optimal code of cardinality $M=L_m(n,s)$ (see also the comment after the next theorem). 
Let the Levenshtein polynomial $f_{m}^{(n,r)}(t)$ have roots $\alpha_0^\prime<\alpha_1^\prime<\cdots<\alpha_{k-1+\varepsilon}^\prime=r$ 
with corresponding weights
$\rho_0^\prime, \rho_1^\prime,\ldots,\rho_{k-1+\varepsilon}^\prime$ in the Levenshtein $1/M$-quadrature rule.

\begin{theorem}
\label{strip_thm}
The energy of any code $C \in C(n,M,s)$ is bounded from below and above by
\begin{equation}
\label{ulb-uub-odd}
M^2 \sum_{i=0}^{k-1+\varepsilon} \rho_i^\prime h(\alpha_i^\prime) \leq E_h(C) \leq
M\left(\frac{M}{L_{m}(n,s)}-1\right)f(1) +M^2\sum_{i=0}^{k-1+\varepsilon} \rho_i h(\alpha_i) ,
\end{equation}
\end{theorem}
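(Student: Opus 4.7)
The plan is to treat the two sides of \eqref{ulb-uub-odd} separately. The upper inequality is essentially a restatement of Theorem~\ref{thm construction}: any $C \in C(n,M,s)$ satisfies the hypotheses there with $f = f_m^{(h)}$, so the right half of \eqref{ulb-uub-odd} requires no additional argument beyond noting that the feasible polynomial in Theorem~\ref{thm construction} depends only on $n$, $s$, and $h$.

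For the lower bound I would invoke the universal lower bound (ULB) machinery of \cite{BDHSS-ca}. First, since $L_m(n,\cdot)$ is continuous and strictly increasing on $I_m$ (as noted after \eqref{L_function}) and the Levenshtein bound \eqref{L_bnd} yields $M = |C| \leq A(n,s) \leq L_m(n,s)$, the equation $M = L_m(n,r)$ has a unique solution $r \in I_m$ with $r \leq s$; equality holds precisely when a universally optimal code of cardinality $L_m(n,s)$ exists. The nodes $\alpha_0^\prime,\ldots,\alpha_{k-1+\varepsilon}^\prime$ and weights $\rho_0^\prime,\ldots,\rho_{k-1+\varepsilon}^\prime$ of the associated Levenshtein $1/M$-quadrature are then well defined.

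Next I would apply the ULB construction to obtain a polynomial $g(t)$ of degree at most $m = 2k-1+\varepsilon$ of the form $g(t) = \mu f_m^{(n,r)}(t) + H_{h,T^\prime}(t)$, where $T^\prime$ is the multiset of roots of $f_m^{(n,r)}$ and $\mu \geq 0$ is chosen so that (i) $g(t) \leq h(t)$ on $[-1,1]$ and (ii) the Gegenbauer coefficients of $g$ satisfy $g_i \geq 0$ for all $i \geq 1$. These are the sign-dual conditions to (F1) and (F2). The Hermite remainder formula, together with absolute monotonicity of $h$ and the sign $f_m^{(n,r)}(t) \leq 0$ on $[-1,r]$, forces $H_{h,T^\prime}(t) \leq h(t)$ on $[-1,r]$, and the correction by $\mu f_m^{(n,r)}$ preserves this inequality; that the $g_i \geq 0$ condition is simultaneously achievable for some feasible $\mu$ is the main technical point, resolved in \cite{BDHSS-ca} using positivity of the Gegenbauer coefficients $\ell_i$ of the Levenshtein polynomial. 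The dual of Theorem~\ref{thm 1} then yields $E_h(C) \geq M(Mg_0 - g(1))$. Expanding $g_0$ through the Levenshtein $1/M$-quadrature at parameter $r$ and invoking the interpolation identities $g(\alpha_i^\prime) = h(\alpha_i^\prime)$ gives
$$Mg_0 - g(1) = M\sum_{i=0}^{k-1+\varepsilon} \rho_i^\prime h(\alpha_i^\prime),$$
which multiplied by $M$ is the claimed lower bound.

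The main obstacle is largely bookkeeping rather than substantively new: the lower-bound argument mirrors the proof of Theorem~\ref{thm construction} with all sign conventions reversed, and the only delicate ingredient --- simultaneous nonnegativity of the Gegenbauer coefficients of $g(t)$ for a suitable $\mu$ --- is already established in \cite{BDHSS-ca} and pivots on the same positivity $\ell_i > 0$ used here. The conceptual novelty is the packaging: combining the UUB at parameter $s$ with the ULB at parameter $r$ traps $E_h(C)$ in an explicit strip whose width is governed by the discrepancy between $s$ and $r$ as measured through $L_m(n,\cdot)$.
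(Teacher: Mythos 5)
Your proposal is correct and follows exactly the route the paper intends: the paper states Theorem~\ref{strip_thm} without a formal proof, obtaining the right-hand inequality directly from Theorem~\ref{thm construction} and citing \cite{BDHSS-ca} for the left-hand one, which is precisely your decomposition. The details you supply for the lower bound (the sign-dual feasibility conditions and the Levenshtein $1/M$-quadrature at the parameter $r$ with $L_m(n,r)=M$) match the ULB construction the paper relies on.
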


It is clear from the above that $M=L_m(n,s)$ implies the coincidence of the upper and lower bounds in \eqref{ulb-uub-odd}. In this case
the corresponding codes are sharp configurations (also universally optimal codes; see \cite{BOT,CK07,lev98}) which means that they attain simultaneously
the Levenshtein bound, the ULB \cite{BDHSS-ca} and the UUB from Theorem \ref{thm construction} (as the last two coincide and the
strip \eqref{ulb-uub-odd} becomes a point). Two prominent examples are given by the simplex code $C \in C(n,n+1,-1/n)$ and the 
cross polytope (also known as bi-orthogonal code) $C \in C(n,2n,0)$.

\section{Examples }

\subsection{Orthonormal basis codes and the UUB}
First, we now provide an example of a code where the UUB is attained. Moreover, we obtain more than one optimal polynomials.


Suppose $C \subset \mathbb{S}^{n-1}$ consists of orthonormal basis vectors in $\mathbb{R}^n$ and select $s=0$. Then $M=n$ and
\[ E_h(C)=n(n-1)h(0). \]
Clearly, the constant polynomial $f(t):=h(0)$ provides one solution to the LP problem. We now determine a second one.

In our construction, we have $m=2$,
\[ f_2^{(n,0)}(t)=t(t+1)=\frac{1}{n}P_0^{(n)}(t)+P_1(t)+\frac{n-1}{n}P_2^{(n)}(t), \]
$T=L=\{-1,0\}$, and
\[ g_T(t)=(h(0)-h(-1))t+h(0)=h(0)P_0^{(n)}(t)+(h(0)-h(-1))P_1^{(n)}(t). \]
The choice $\lambda=h(0)-h(-1)>0$ yields
\begin{eqnarray*}
f(t) &=& h(0)-(h(0)-h(-1))t^2 \\
      &=& \frac{(n-1)h(0)+h(-1)}{n}P_0^{(n)}(t)-\frac{(n-1)(h(0)-h(-1))}{n}P_2^{(n)}(t) \in U_{n,0}^h .
\end{eqnarray*}
Our bound can be computed directly by
\[ n(nf_0-f(1))=n(n-1)f(0)=n(n-1)h(0)=E_h(C). \]
Since $k=1$, $\varepsilon=1$, we find the quadrature nodes and weights to be $(\alpha_0,\rho_0)=(-1,1/2n)$, $(\alpha_1,\rho_1)=(0,(n-1)/n)$. Computing the Levenshtein function $L_2 (n,0)=2n$ (note that
$L_3(n,0)=2n$ as well), the right-hand side of \eqref{uub-main} becomes
\[ n\left(\frac{n}{2n}-1\right)h(-1) +n^2\left(\frac{h(-1)}{2n}+\frac{(n-1)h(0)}{n}\right)=n(n-1)h(0)=E_h(C).  \]

\subsection{Bounds for $(n,M)=(n,2n+1)$ codes}

It is natural to consider upper bounds for parameters where good codes are known. Here we show how our bound behaves
for spherical codes $C_n \subset \mathbb{S}^{n-1}$ with $M=2n+1$ points constructed in \cite{EZ01}. These codes are conjectured
to be optimal (see \cite[Section 3.3]{BBCGKS}) but this is proved in dimensions 3 \cite{SvdW} and 4 \cite{ZZ17} only.

The maximal inner product of $C_n$ is equal to the unique root $s \in (0,1/n)$ of the equation
\[ n(n-2)^2 X^3-n^2X^2-nX+1=0. \]
These parameters are in the region of the third Levenshtein bound; i.e., we use $m=3$.

The ULB \cite{BDHSS-ca} with parameters coming from $L_3(n,r)=M$ as in Section 3.5 is
\begin{equation}
\label{ulbn5M11} R_h(n,2n+1):=M^2\left(\rho_0^\prime h(\alpha_0^\prime)+\rho_1^\prime h(r)\right), \ r=\alpha_1^\prime.
\end{equation}

To obtain the UUB in this case we consider the corresponding Levenshtein polynomial $f_3^{(n,s)}(t)$ with zeros $\alpha_0$ (double) and
$\alpha_1=s$ (simple). Then $T=\{\alpha_0,\alpha_0,\alpha_1\}$ and
$g(t):=H_{h,T}(t)$
is the second degree interpolant to $h$ in the nodes $\alpha_0$ (doubly) and $\alpha_1$. The polynomial from \eqref{uub_pol} is
\[ f(t)=-\lambda  f_3^{(n,s)}(t)+g(t)=\sum_{i=0}^3 f_i P_i^{(n)}(t) \]
where $\lambda>0$ has to be chosen to ensure $f_1 \leq 0$ and $f_2 \leq 0$ ($f_3<0$ follows for every $\lambda>0$).

Here are the numerical results for $n=5$, $M=2n+1=11$ and $s \approx 0.13285$ with the Newton potential $h(t)=1/(2-2t)^{(n-2)/2}$.

The lower bound from \eqref{ulb-uub-odd} (specified in \eqref{ulbn5M11}) is
\[ R_h(5,11)=11^2(\rho_0^\prime h(\alpha_0^\prime)+\rho_1^\prime h(\alpha_1^\prime)) \approx 37.484. \]

For the construction of the upper bound we find $f_3^{(5,s)}(t)$ with roots $(\alpha_0,\alpha_1=s) \approx (-0.68069,0.13285)$
and
\[ g(t)=H(h;\alpha_0,\alpha_0,\alpha_1) \approx 0.23835 t^2+0.46931 t + 0.37128. \]
Then we search for $\lambda$ to satisfy the conditions $f_i \leq 0$ for $i=1,2,3$.
The computations show that all $\lambda \geq 0.661$ work as best the upper bound $\approx 41.906$ from Theorem \ref{thm construction}
(also the upper bound in \eqref{ulb-uub-odd}) is obtained with the smallest possible $\lambda=g_1/\ell_1 \approx 0.661$.
For the representation of the upper bound in \eqref{ulb-uub-odd} we  compute $L_3(5,s) \approx 13.3014$ and
\begin{equation}
\label{weak-uub-3}
\mathcal{G}_h(n,M,s) \leq 11\left(\frac{11}{L_3(5,s)}-1\right)f(1)+11^2\left(\rho_0 h(\alpha_0)+\rho_1 h(s)\right).
\end{equation}

The Newton energy of the code $C_5$ is
\[ E_h(C_5)=(3n^2-n)h(s)+(n^2-n)h(a)+2nh(b)+2nh(c) \approx 39.0225, \]
where $a \approx -0.22793$, $b \approx -0.553428$, and $c \approx -0.89904$. The best known (for the
minimum Newton energy problem) code of 11 points on $\mathbb{S}^4$ has energy $\approx 38.0544$ \cite{BBCGKS}.

\begin{table}[h]
\caption{ULB and UUB for Newton energy of kissing numbers in dimensions $2-10$}
\label{KN}

\vspace*{3mm}

\centering
\begin{tabular}{|c|c|c|c|c|c|c|}
\hline
 $n$  &  Kissing numbers & $m$ &  $L_m(n,1/2)$ &      ULB  &  UUB  \\ 
  
   &  bounds \cite{EZ01,BV08,MV10,MOF18} & &   &  & \\ \hline
   $2$ &  $6$ & $5$  & 6  & -10.75...  &   -10.75...   \\ \hline     
     $3$ &  $12$ & $5$  &  13.2  & 98.3  &   101.3  \\ \hline     
  $4$ &  $24$   &5 &  $26$  &   $333$  & $344$         \\ \hline
  $5$ &  $40,\ldots,44$   &$6$ &  $48$  &    $765.,\ldots,947.$ &  $840.,\ldots,989.$  \\ \hline
  $6$ &  $72,\ldots,78$   &$6$ &  $84.$ &    $2116.,\ldots,2530.$  & $2218.,\ldots,2594.$     \\ \hline       
 $7$ &  $126,\ldots,134$ &  $6$  & $142.$    & $5552.,\ldots,6376. $  & $5793.,\ldots,6514.$    \\ \hline        
  $8$ &  $240$ & $7$  & 240   & 17721.  &   17721.   \\ \hline      
  $9$ &  $306,\ldots,363$ &  $7$  & $384.$    & $23149.,\ldots,34231.$  & $27443.,\ldots,35616.$    \\ \hline    
    $10$ &  $500,\ldots,554$ &  $7$  & $605$    & $53059.,\ldots,67004.$  & $61467.,\ldots,71606.$    \\ \hline      
 \end{tabular}
\end{table}

\subsection{Upper and lower bounds on energy of kissing configurations}

We now consider upper bounds on the Newtonian energies of {\em kissing configurations}. For such configurations $s=1/2$. Depending on the dimension $n$ we have various known bounds in the literature for the corresponding {\em kissing number}, i.e. the maximum possible cardinality of a code that has a separation parameter $s=1/2$.  The exact kissing numbers are known for dimensions $n=1,2,3,4,8,24$ (see \cite{lev79, M, OS79,SvdW52}).

Table \ref{KN} summarizes our results for $n=2-10$. We list the dimension, known kissing number intervals, Levenshtein interval $m$ and Levenshtein function value $L_m(n,1/2)$ and the corresponding ULB and UUB intervals. Numbers are rounded to integer parts, rounding being indicated with the decimal point. 
\vskip 3mm

\subsection{Conclusion and future work}

The conditions for attaining the bound of Theorem \ref{thm construction} lead to the usual suspects -- the universally optimal
configurations defined in \cite{CK07}. From a broader viewpoint, our upper bounds help provide a range of possible energies (or energy levels) for `good' spherical codes. Thereby, we obtain restrictions on the structure of codes that can be useful for classification (or nonexistence) purposes.  We plan to explore this idea in a future paper that relates to kissing configurations.


Several additional related questions arise quite naturally. For example, whether the optimality condition $f_1=0$ (see Remark \ref{i=1}) is true for every absolutely monotone potential function $h$. A second question is whether effective `next level' upper bounds can be developed
in a manner similar to that derived for lower bounds by the present authors in \cite{BDHSS-hd} (see also \cite{CK07}). 

{\bf Acknowledgments.} The authors thank the anonymous referees for helpful suggestions and comments.





\begin{thebibliography}{99}

\bibitem{BV08} C.\,Bachoc, F.\,Vallentin, New upper bounds for kissing numbers from semidefinite programming,
\emph{J. Amer. Math. Soc.} 21 909-924 (2008).

\bibitem{BOT} E.\,Bannai, T.\,Okuda, M.\,Tagami, Spherical designs of harmonic index $t$, {\em J. Approx. Theory} 195, 1-18 (2015)

\bibitem{BBCGKS} B.\,Ballinger, G.\,Blekherman, H.\,Cohn, N.\,Giansiracusa, E.\,Kelly, A.\,Sh\H{u}rmann, Experimental study of energy-minimizing point configurations on spheres, {\em Experiment. Math.} 18, 257-283 (2009).

\bibitem{BRV15} A.\,Bondarenko, D.\,Radchenko, M.\,Viazovska, Well-separated spherical designs,
\emph{Constr. Approx.} 41, 93-112, 2015. 

\bibitem{BHS20} S.\,V. Borodachov,  D.P. Hardin, and E.B. Saff.
\emph{Discrete Energy on Rectifiable Sets}, Springer, New York.  2020.

\bibitem{BDHSS-drna}
{P.\,Boyvalenkov, P.\,Dragnev, D.\,Hardin, E.\,Saff, M.\,Stoyanova}.
Universal upper and lower bounds on energy of spherical designs,
\emph{Dolomites Res. Notes Approx.} 8 51-65 (2015).

\bibitem{BDHSS-ca} P.\,Boyvalenkov, P.\,Dragnev, D.\,Hardin, E.\,Saff, M.\,Stoyanova.
Universal lower bounds for potential energy of spherical codes, \emph{Constr. Approx.} 44, 385-415 (2016).

\bibitem{BDHSS-dcc17} P.\,Boyvalenkov, P.\,Dragnev, D.\,Hardin, E.\,Saff, M.\,Stoyanova.
Energy bounds for codes and designs in Hamming spaces,
{\it Designs, Codes and Cryptography} 82(1), 2017, 411-43

\bibitem{BDHSS-dcc19} P.\,Boyvalenkov, P.\,Dragnev, D.\,Hardin, E.\,Saff, M.\,Stoyanova.
On spherical codes with inner products in prescribed interval, 
{\it Designs, Codes and Cryptography} 87, 2019, 299-315.

\bibitem{BDHSS-subm} P.\,Boyvalenkov, P.\,Dragnev, D.\,Hardin, E.\,Saff, M.\,Stoyanova.
Energy Bounds for Codes in Polynomial Metric Spaces, {\em Analysis and Mathematical Physics} 9, Issue 2, 781-808 (2019).

\bibitem{BDHSS-hd} P.\,Boyvalenkov, P.\,Dragnev, D.\,Hardin, E.\,Saff, M.\,Stoyanova.
Next levels universal bounds for spherical codes: the Levenshtein framework lifted, submitted (arXiv:1906.03062).

\bibitem{BDHSS-subm19} P.\,Boyvalenkov, P.\,Dragnev, D.\,Hardin, E.\,Saff, M.\,Stoyanova.
Universal bounds for size and energy of codes of given minimum and maximum distances, submitted (arXiv:1910.07274).

\bibitem{CK07} H.\,Cohn, A.\,Kumar,
Universally optimal distribution of points on spheres,
J. Amer. Math. Soc., 20, 99-148 (2006).

\bibitem{cdB} S. D. Conte, C. de Boor,  Elementary Numerical Analysis: An Algorithmic Approach, SIAM, (2017).


\bibitem{DGS77}
{P.\,Delsarte, J.-M.\,Goethals, J.\,J.\,Seidel}, Spherical codes and designs,
\emph{Geom. Dedicata} 6, 363-388 (1977) .

\bibitem{EZ01} Th.\,Ericson, V.\,Zinoviev, \emph{Codes on Euclidean Spheres}. North-Holland
Mathematical Library, Elsevier, 2001.

\bibitem{GS19} P.\,Grabner, T.\,Stepanyuk, Comparison of probabilistic and deterministic point sets, \emph{J. Approx. Theory}
239, 128-143 (2019).

\bibitem{Hes09} K.\,Hesse, The $s$-energy of spherical designs on $\mathbb{S}^2$,
\emph{Adv. Comput. Math.}, 30, 37-59 (2009).

\bibitem{HL08} K.\,Hesse, P.\,Leopardi, The Coulomb energy of spherical designs on $\mathbb{S}^2$,
\emph{Adv. Comput. Math.}, 28, 331-354 (2008).


\bibitem{Koo73} T.\,H.\,Koorwinder, The addition formula for Jacobi polynomials and spherical harmonics, {\em SIAM J. Appl. Math.} 25, 236-246 (1973).

\bibitem{Leo13} P. Leopardi,  Discrepancy, separation and Riesz energy of finite point sets on the unit sphere, {\em Adv. Comput. Math.} {\bf 39}, 27-43 (2013).


\bibitem{lev79} V.\,I.\,Levenshtein, On bounds for packings in $n$-dimensional Euclidean space, {\emph Soviet Math. Dokl.} 20, 417-421 (1979).

\bibitem{lev92} V.\,I.\,Levenshtein, Designs as maximum codes in polynomial
metric spaces, \emph{Acta Appl. Math.} 25, 1-82 (1992)

\bibitem{lev98} V.\,I.\,Levenshtein,
Universal bounds for codes and designs,
\emph{Handbook of Coding Theory}, V.\,S.~Pless and W.\,C.~Huffman, Eds., Elsevier, Amsterdam, Ch.~6, 499-648 (1998).

\bibitem{MOF18} F.\,C.\,Machado, F.\,M.\,de Oliveira Filho, Improving the semidefinite programming bound for the kissing 
number by exploiting polynomial symmetry, \emph{Exp. Math.} 27, 362-369 (2018). 

\bibitem{MV10} H.\,D.\,Mittelmann, F.\,Vallentin, High-accuracy semidefinite programming bounds for kissing
numbers, \emph{Exp. Math.} 19, 175-179 (2010).

\bibitem{M} O.\,Musin,
The kissing number in four dimensions.
{\em Ann. of Math.}, 168, 1-32 (2008).

\bibitem{OS79} A.\,M.\,Odlyzko, N. J. A. Sloane, New bounds on the number of unit spheres that can touch a unit sphere in $n$ dimensions, {\emph J. Combin. Theory Ser. A} 26, 210-214 (1979).

\bibitem{Sch1942} I.\,J.\,Schoenberg,  Positive definite functions on spheres,
  \emph{Duke Math. J.}, 9, 96-108 (1942).


\bibitem{SvdW} K.\,Sch\"utte, B.\,L.\,van der Waerden, Auf welcherKugel haben 5, 6, 7, oder 8 Punkte
mit Mindestabstand Eins Platz?, \emph{Math. Ann.} 123, 96-124 (1951).

\bibitem{SvdW52} K.\,Sch\"utte, B.\,L.\,van der Waerden, Das Problem der dreizehn Kugeln, \emph{Math. Ann.} 125, 325-334 (1952).

\bibitem{Ste19} T.\,Stepanyuk, Estimates for logarithmic and Riesz energies for spherical $t$-designs,  arXiv:1901.00437.

\bibitem{Sze}
G.\,Szeg\H{o}, \emph{Orthogonal polynomials}, Amer. Math. Soc. Col. Publ., 23, Providence, RI, 1939.

\bibitem{W} G. Wagner, On means of distances on the surface of a sphere (upper bounds), \emph{Pac. J. Math} 154, 381-396 (1992).

\bibitem{Y} V. A. Yudin, Minimum potential energy of a point system of charges.
\emph{Diskret. Mat.}, 4(2),115-121 (1992).

\bibitem{ZZ17} D.\,Zinoviev, V.\,Zinoviev, On the spherical code $(4,\rho,9)$, in Proc. Eighth Intern. Workshop
on Optimal Codes and Related Topics, Sofia, Bulgaria, July 10-14, 142-148 (2017).

\end{thebibliography}
\end{document}